\newcommand{\Ex}{\mathbb{E}}
\newcommand{\R}{\mathbb{R}}
\newcommand{\hY}{\hat{Y}}
\newcommand{\hX}{\hat{X}}
\newcommand{\kp}{\kappa}
\newcommand{\tPi}{\tilde{\Pi}}
\newtheorem{theorem}{Theorem} 
\newtheorem{proposition}[theorem]{Proposition}
\newtheorem{lemma}[theorem]{Lemma}
\newtheorem{remark}[theorem]{Remark}
\def\title#1{\hfil\break\hfil\break\hfil\break
\par\addvspace\baselineskip\noindent\baselinestretch
\ignorespaces{\LARGE\bf#1}\hfil\break}
\def\author#1{\par\addvspace\baselineskip\noindent
\ignorespaces{\large\bf#1}}
 \def\institute#1{\par\addvspace\baselineskip\noindent
 \ignorespaces{\small#1}\hfil\break}
\begin{document}

\title{Strong convergence of some drift implicit Euler scheme. Application to the CIR process. }
\author{Aur\'{e}lien Alfonsi}
\institute{CERMICS, projet MATHFI, Ecole Nationale des Ponts et Chauss\'{e}es, 6-8
avenue Blaise Pascal, Cit\'{e} Descartes, Champs sur Marne,
77455 Marne-la-vall\'{e}e, France. \\ e-mail : {\tt alfonsi@cermics.enpc.fr } \\
\today }
{\abstract We study the convergence of a drift implicit scheme for
  one-dimensional SDEs that was considered
  by Alfonsi~\cite{A} for the Cox-Ingersoll-Ross (CIR) process. Under 
  general conditions, we obtain a strong convergence of order~$1$. In the CIR case,
  Dereich, Neuenkirch and Szpruch~\cite{DNS}  have shown recently a strong convergence
  of order~$1/2$ for this scheme. Here, we obtain a strong convergence
  of order~$1$  under more restrictive assumptions on the CIR
  parameters. \\
  
}
{\noindent {\it Keywords: } \it Discretization scheme, 
  Cox-Ingersoll-Ross model, Strong error, Lamperti transformation.\\
  {\it AMS Classification (2010):} 65C30, 60H35 \\
}

%
%
%
\thispagestyle{empty}

This paper analyses the strong convergence error of a
discretization scheme for the Cox-Ingersoll-Ross (CIR) process and complements
a recent paper by~Dereich, Neuenkirch and Szpruch~\cite{DNS}. The CIR
process, which is widely used in financial modelling, follows the SDE:
\begin{equation}\label{SDE_CIR}
dX_t=(a-kX_t)dt +\sigma \sqrt{X_t} dW_t, \  X_0=x.
\end{equation}
Here, $W$ denotes a standard Brownian motion, $a\ge 0$, $k\in \R$, $\sigma >
0$ and $x\ge 0$.  This SDE has a unique strong solution that is
nonnegative. It is even positive when $\sigma^2\le 2a$ and $x>0$, which we
assume in this paper. It is
well-known that the usual Euler-Maruyama scheme is not defined
for~\eqref{SDE_CIR}. Different ad-hoc discretization schemes have thus been
proposed in the literature (see references in~\cite{DNS}). Here, we focus on a
drift implicit scheme that has been proposed in Alfonsi~\cite{A}. We consider
a time horizon~$T>0$ and a regular time grid:
$$t_k=\frac{kT}{n}, \ 0 \le k \le n. $$
By It\^o's formula, $Y_t=\sqrt{X_t}$ satisfies :
\begin{equation}\label{SDE_sqrtCIR}
dY_t=\left( \frac{a-\sigma^2/4}{2Y_t}-\frac{k}{2}Y_t \right) dt + \frac{\sigma}{2}  dW_t, \  Y_0=\sqrt{x}.
\end{equation}
We consider the following drift implicit Euler scheme
\begin{equation}\label{imp_sch_CIR}
\hY_0=\sqrt{x},\ \hY_t= \hY_{t_k}+\left(
  \frac{a-\sigma^2/4}{2\hY_{t}}-\frac{k}{2}\hY_{t} \right) (t-t_k) +
\frac{\sigma}{2}  (W_{t}-W_{t_k}), t\in (t_k,t_{k+1}].
\end{equation}
The equation~\eqref{imp_sch_CIR} is a quadratic equation that has a unique
positive equation:
{\small $$ \hY_t=\frac{\hY_{t_k}+\frac{\sigma}{2}(W_{t}-W_{t_k}) + \sqrt{\left(\hY_{t_k}+\frac{\sigma}{2}(W_{t}-W_{t_k})
      \right)^2 +2\left(1+ \frac{k}{2}(t-t_k)
      \right)\left(a-\frac{\sigma^2}{4} \right)(t-t_k) } }{2\left(1+
      \frac{k}{2}(t-t_k)  \right)},$$}

\noindent provided that the time-step is small enough ($T/n\le
2/\max(-k,0)$ with the convention $2/0=+\infty$). Last, we set $\hX_t=(\hY_t)^2$, $t\in (t_k,t_{k+1}]$. It is
shown in~\cite{A} that this scheme has uniformly bounded moments. We
recall now the main result of Dereich, Neuenkirch and Szpruch~\cite{DNS} that
gives a strong error convergence of order~$1/2$.
\begin{theorem}\label{thm_dns}
Let $x>0$, $2a>\sigma^2$ and $T>0$. Then, for all
$p\in[1,\frac{2a}{\sigma^2})$, there is a constant $K_p>0$ such that for any  $n \ge \frac{T}{2} \max(-k,0)$,
$$\left(\Ex \left[\max_{t\in[0,T]} |\hX_t-X_t|^p \right] \right)^{1/p}\le K_p \sqrt{\frac{T}{n}}.$$  
\end{theorem}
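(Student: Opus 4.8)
The approach I would take is to work on the Lamperti scale, i.e.\ with $Y=\sqrt X$ and $\hY=\sqrt{\hX}$, and to exploit that the drift $b(y):=\frac{a-\sigma^2/4}{2y}-\frac k2 y$ of \eqref{SDE_sqrtCIR}, though singular at $0$, is one-sided Lipschitz: since $a-\sigma^2/4>0$,
\[
\bigl(b(y)-b(z)\bigr)(y-z)=-\frac{a-\sigma^2/4}{2}\,\frac{(y-z)^2}{yz}-\frac k2(y-z)^2\ \le\ \frac{\max(-k,0)}{2}\,(y-z)^2\qquad(y,z>0).
\]
The whole point of the \emph{drift-implicit} scheme \eqref{imp_sch_CIR} is that this monotonicity lets the error analysis run \emph{without any lower bound on $\hY$}; only negative moments of the true process $Y$ (equivalently of $X$) will be needed. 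First I would reduce the statement to the Lamperti scale: writing $\hX_t-X_t=(\hY_t-Y_t)(\hY_t+Y_t)$ and applying H\"older's inequality with an exponent $p'\in(p,\tfrac{2a}{\sigma^2})$,
\[
\Ex\Bigl[\max_{t\le T}|\hX_t-X_t|^p\Bigr]\le\Ex\Bigl[\max_{t\le T}|\hY_t-Y_t|^{p'}\Bigr]^{p/p'}\,\Ex\Bigl[\max_{t\le T}(\hY_t+Y_t)^{pp'/(p'-p)}\Bigr]^{(p'-p)/p'},
\]
and the second factor is finite because $X$ and, by \cite{A}, $\hX$ have bounded moments of all orders (the running maxima being handled by the Burkholder--Davis--Gundy inequality). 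It thus suffices to prove $\Ex[\max_{t\le T}|\hY_t-Y_t|^{q}]\le K_q\,(T/n)^{q/2}$ for every $q<\tfrac{2a}{\sigma^2}$.

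Set $e_t=\hY_t-Y_t$. Subtracting \eqref{SDE_sqrtCIR} from \eqref{imp_sch_CIR}, for $t\in(t_k,t_{k+1}]$,
\[
e_t=e_{t_k}+\bigl(b(\hY_t)-b(Y_t)\bigr)(t-t_k)+R_k(t),\qquad R_k(t):=\int_{t_k}^t\bigl(b(Y_t)-b(Y_s)\bigr)\,ds .
\]
Multiplying by $e_t$, using the one-sided Lipschitz bound in the form $(b(\hY_t)-b(Y_t))e_t\le\frac{\max(-k,0)}{2}\,e_t^2$, and invoking the step restriction $T/n\le 2/\max(-k,0)$ so that $1-\frac{\max(-k,0)}{2}(t-t_k)>0$, a short sign discussion gives $|e_t|\le(1+C\tfrac Tn)(|e_{t_k}|+|R_k(t)|)$ for $n$ large, with $C$ depending only on $k$. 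Iterating from $e_0=0$ yields $\max_{0\le m\le n}|e_{t_m}|\le e^{CT}\sum_{k=0}^{n-1}|R_k(t_{k+1})|$, hence $\bigl\|\max_m|e_{t_m}|\bigr\|_q\le e^{CT}\sum_{k=0}^{n-1}\|R_k(t_{k+1})\|_q$. Everything then comes down to the uniform estimate $\|R_k(t_{k+1})\|_q=O\bigl((T/n)^{3/2}\bigr)$, which, summed over the $n$ steps, produces exactly the order $(T/n)^{1/2}$.

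This local remainder estimate is the heart of the proof: $R_k(t_{k+1})$ is the quadrature error when $\int_{t_k}^{t_{k+1}}b(Y_s)\,ds$ is replaced by $b(Y_{t_{k+1}})(t_{k+1}-t_k)$. Expanding $b(Y_{t_{k+1}})-b(Y_s)$ either directly, as $-(Y_{t_{k+1}}-Y_s)\bigl(\frac{a-\sigma^2/4}{2Y_{t_{k+1}}Y_s}+\frac k2\bigr)$ together with $Y_{t_{k+1}}-Y_s=\int_s^{t_{k+1}}b(Y_u)\,du+\frac\sigma2(W_{t_{k+1}}-W_s)$, or via It\^o's formula applied to $b(Y_\cdot)$, and applying the Burkholder--Davis--Gundy inequality to the resulting stochastic integrals, one is led to estimates on negative moments of $Y$ --- both pointwise, $\sup_{u\le T}\Ex[Y_u^{-m}]$, and time-averaged, $\Ex[(\int_0^T Y_u^{-m}\,du)^q]$. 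These are finite precisely when the relevant exponents stay below the CIR integrability threshold $\tfrac{4a}{\sigma^2}$ for $Y$ (equivalently $\tfrac{2a}{\sigma^2}$ for $X$), which is where the hypothesis $2a>\sigma^2$ enters and what pins down the admissible range $q<\tfrac{2a}{\sigma^2}$. I expect this to be the main obstacle: one must organise the estimate of the singular quadrature error so as never to call on negative moments of an order beyond what the CIR law permits --- a naive H\"older splitting of the $1/(Y_{t_{k+1}}Y_s)$ factor, or a careless treatment of the It\^o corrector, would overshoot this threshold --- and this is delicate when $a$ is only slightly above $\sigma^2/2$.

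Finally, to pass from the grid to $\max_{t\in[0,T]}$, I would note that in the interior increment $e_t-e_{t_k}=b(\hY_t)(t-t_k)-\int_{t_k}^t b(Y_s)\,ds$ the Brownian parts of $\hY$ and $Y$ cancel, so only drift increments remain. These are $O((T/n)^{1/2})$ in $L^q$: $\int_{t_k}^t b(Y_s)\,ds$ is bounded by $(T/n)\max_{s\le T}|b(Y_s)|$, whose $L^q$-norm is a finite, $n$-independent constant for $q<\tfrac{4a}{\sigma^2}$, while $b(\hY_t)(t-t_k)$ is controlled via the explicit quadratic formula for $\hY_t$ --- which shows that whenever $\hY_t$ is small the large value of $\frac{a-\sigma^2/4}{2\hY_t}$ is compensated, so that this term too is $O((T/n)^{1/2})$ --- together with the fact that $\hY$ spends very little time near $0$. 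Combining this with the grid estimate of the previous steps gives the required bound on $\Ex[\max_{t\le T}|\hY_t-Y_t|^q]$, hence, through the first reduction, the theorem.
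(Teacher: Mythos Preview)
The paper does not prove Theorem~\ref{thm_dns}: it is quoted from Dereich, Neuenkirch and Szpruch~\cite{DNS} as background for the paper's own contribution (Theorem~\ref{main_thm} and Proposition~\ref{main_prop}). So there is no ``paper's own proof'' of this statement to compare against.

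Your overall architecture --- pass to the Lamperti scale, exploit one-sided Lipschitzness of $b$, reduce to a quadrature remainder $R_k$, then lift back to $X$ via H\"older --- is sound, and it is in fact the same skeleton the paper uses for Proposition~\ref{main_prop}. The organisational difference is in how the remainders are aggregated: the paper multiplies~\eqref{eq_intermed} by a running product $\Pi_k$, telescopes, and uses an Abel transform so that the accumulated martingale part is bounded by a \emph{single} Burkholder--Davis--Gundy estimate on $\int_0^T (u-t_{\eta(u)})f'(Y_u)\,dW_u$. This ``keeping the stochastic integral together'' is what yields order~$1$. You instead take $L^q$-norms of each $R_k$ separately and sum by the triangle inequality, which costs a factor $\sqrt n$ and is why you target order~$1/2$.

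There is, however, a genuine gap at the step you yourself flag as the main obstacle. Both routes you suggest for $\|R_k(t_{k+1})\|_q=O((T/n)^{3/2})$ --- the It\^o expansion of $b(Y_\cdot)$, and the factorisation $b(Y_{t_{k+1}})-b(Y_s)=-(Y_{t_{k+1}}-Y_s)\bigl(\tfrac{a-\sigma^2/4}{2Y_{t_{k+1}}Y_s}+\tfrac k2\bigr)$ --- produce a term that behaves like $Y^{-3}$ (namely $f'f+\tfrac{\gamma^2}{2}f''$ in the first route, or the product $b(Y_u)\cdot (Y_{t_{k+1}}Y_s)^{-1}$ in the second). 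Any H\"older splitting of this term forces a moment of the form $\Ex[X_u^{-3q/2}]$, which by~\eqref{moments_cir} is finite only for $q<\tfrac{4a}{3\sigma^2}$, not for the full range $q<\tfrac{2a}{\sigma^2}$ asserted in the theorem. Concretely, in your second route one needs simultaneously $q\alpha'<\tfrac{2a}{\sigma^2}$ for the factor $(Y_{t_{k+1}}Y_s)^{-1}$ and $q\alpha<\tfrac{4a}{\sigma^2}$ for the factor $b(Y_u)\sim Y_u^{-1}$, with $\tfrac1\alpha+\tfrac1{\alpha'}=1$; these constraints are compatible only when $q<\tfrac{4a}{3\sigma^2}$. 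So as written, your argument would at best establish the order-$1/2$ bound for $p<\tfrac{4a}{3\sigma^2}$ --- a result already dominated by the paper's Theorem~\ref{main_thm}. Reaching the full DNS range $p<\tfrac{2a}{\sigma^2}$ requires a genuinely different treatment of this singular term, which your outline acknowledges as delicate but does not supply.

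A smaller issue: in your off-grid step you invoke $\bigl\|\max_{s\le T}|b(Y_s)|\bigr\|_q$, which requires $\Ex\bigl[(\inf_{s\le T}X_s)^{-q/2}\bigr]<\infty$ rather than merely the pointwise bound $\sup_s\Ex[X_s^{-q/2}]<\infty$; and the control of $b(\hY_t)(t-t_k)$ ``via the explicit quadratic formula'' is asserted rather than carried out.
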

\noindent Let us remark that, contrary to~\cite{DNS}, we do not consider a linear interpolation between
 $t_k$ and~$t_{k+1}$ here for $\hX_t$. This removes the logarithm term
of Theorem~1.1 in~\cite{DNS}.

The strong convergence rate of~$\hX$ is studied numerically in
Alfonsi~(\cite{A}, Figure~2). This numerical study shows that the strong
convergence rate depends on the parameters $\sigma^2$ and $a$. When $\sigma^2/a$
is small enough, a strong convergence of order~$1$ is observed. The scope of the
paper is to prove the following result.
\begin{theorem}\label{main_thm}
Let $x>0$, $a>\sigma^2$ and $T>0$. Then, for all
$p\in[1,\frac{4a}{3\sigma^2})$, there is a constant $K_p>0$ such that for any  $n \ge \frac{T}{2} \max(-k,0)$,
$$\left(\Ex \left[\max_{t\in[0,T]} |\hX_t-X_t|^p \right] \right)^{1/p}\le K_p \frac{T}{n}.$$  
\end{theorem}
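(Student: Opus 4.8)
The plan is to work entirely with the Lamperti-transformed processes $Y=\sqrt{X}$ and $\hY$. Since $\hX_t=(\hY_t)^2$ and $X_t=Y_t^2$ we have $|\hX_t-X_t|=|e_t|\,(\hY_t+Y_t)$ with $e_t:=\hY_t-Y_t$, so H\"older's inequality with exponents $(r,r')$, $r$ close to $1$, gives
\[
\Ex\Big[\max_{t\in[0,T]}|\hX_t-X_t|^p\Big]\le \Ex\Big[\max_{t\in[0,T]}|e_t|^{pr}\Big]^{1/r}\,\Ex\Big[\max_{t\in[0,T]}(\hY_t+Y_t)^{pr'}\Big]^{1/r'} .
\]
The second factor is bounded uniformly in $n$ by the moment estimates of~\cite{A} together with elementary bounds on $Y$, so it is enough to prove $\Ex[\max_t|e_t|^q]^{1/q}\le K_q\,T/n$ for every $q<\tfrac{4a}{3\sigma^2}$; letting $r\downarrow 1$ then yields the theorem. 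Before that I would record the moment inputs that will be used throughout, all uniform in $n$ and $t$: positive moments of $X,Y,\hX,\hY$ of every order; negative moments, i.e.\ moments of $X_t^{-1}$ and of $\hX_{t_k}^{-1}$, of every order $<\tfrac{2a}{\sigma^2}$; and $L^m$-boundedness of $\int_0^T X_u^{-\beta}\,du$ whenever $\beta m<\tfrac{2a}{\sigma^2}$. These are where the restriction on $p$ will ultimately come from.

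Next, the error recursion. Writing $b(y)=\tfrac{a-\sigma^2/4}{2y}-\tfrac k2 y$, one has the one-sided Lipschitz (monotonicity) inequality
\[
(b(y_1)-b(y_2))(y_1-y_2)=-\big(a-\tfrac{\sigma^2}{4}\big)\tfrac{(y_1-y_2)^2}{2y_1y_2}-\tfrac k2 (y_1-y_2)^2\le \tfrac{\max(-k,0)}{2}\,(y_1-y_2)^2,\qquad y_1,y_2>0,
\]
valid because $a>\sigma^2>\sigma^2/4$. Subtracting~\eqref{SDE_sqrtCIR} from~\eqref{imp_sch_CIR} gives, for $t\in(t_k,t_{k+1}]$,
\[
e_t=e_{t_k}+(t-t_k)\big(b(\hY_t)-b(Y_t)\big)+R_{t_k,t},\qquad R_{t_k,t}:=\int_{t_k}^t\big(b(Y_t)-b(Y_s)\big)\,ds,
\]
and by It\^o's formula followed by stochastic Fubini, $R_{t_k,t}=\int_{t_k}^t(u-t_k)\,d\big(b(Y_u)\big)=\tfrac\sigma2\int_{t_k}^t(u-t_k)b'(Y_u)\,dW_u+\int_{t_k}^t(u-t_k)\big(b'b+\tfrac{\sigma^2}{8}b''\big)(Y_u)\,du$. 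The crucial structural observation — this is exactly why additive noise gives order $1$ and not $1/2$ — is that, summed over all intervals, the stochastic parts assemble into the \emph{single} martingale $\mathcal M_t:=\tfrac\sigma2\int_0^t(u-\underline u)\,b'(Y_u)\,dW_u$, where $\underline u=t_k$ for $u\in(t_k,t_{k+1}]$, whose bracket is at most $\tfrac{\sigma^2}{4}(T/n)^2\int_0^T b'(Y_u)^2\,du$; since $b'(Y_u)^2\asymp Y_u^{-4}=X_u^{-2}$, the Burkholder--Davis--Gundy inequality gives $\Ex[\max_t|\mathcal M_t|^q]^{1/q}\le K_q\,T/n$ for all $q<\tfrac{2a}{\sigma^2}$.

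Then I would run a discrete Gr\"onwall argument. Multiplying the recursion by $e_t$ and using the monotonicity bound annihilates the singular part of $b(\hY_t)-b(Y_t)$ and yields, for $n\ge\tfrac T2\max(-k,0)$ (so the implicit factor is positive), after carefully tuned applications of Young's inequality,
\[
e_{t_{k+1}}^2\le\Big(1+\tfrac Cn\Big)e_{t_k}^2+C\,e_{t_{k+1}}(\mathcal M_{t_{k+1}}-\mathcal M_{t_k})+C\,e_{t_{k+1}}A_k,\qquad A_k:=\int_{t_k}^{t_{k+1}}(u-t_k)\big(b'b+\tfrac{\sigma^2}{8}b''\big)(Y_u)\,du.
\]
I would remove the martingale (work with $\tilde e:=e-\mathcal M$, which turns the middle term into a controllable cross-term), iterate the recursion from $e_0=0$, pass to $\max_t$ via Doob and BDG, and finally absorb the $\max_t|e_t|$ produced by $\sum_k|A_k|\le\tfrac Tn\int_0^T|b'b+\tfrac{\sigma^2}{8}b''|(Y_u)\,du$ by Young's inequality. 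This should leave a bound of the form
\[
\max_{t\in[0,T]}|e_t|\le K\Big(\max_{t\in[0,T]}|\mathcal M_t|+\tfrac Tn\int_0^T\big|b'b+\tfrac{\sigma^2}{8}b''\big|(Y_u)\,du+\text{(genuinely higher-order terms)}\Big).
\]
Because $\big(b'b+\tfrac{\sigma^2}{8}b''\big)(y)=\tfrac{c_\star}{y^{3}}+\tfrac{k^2}{4}y$ with $c_\star=\tfrac14(a-\tfrac{\sigma^2}{4})(\tfrac{3\sigma^2}{4}-a)\neq0$, this term behaves like $|c_\star|\,Y_u^{-3}=|c_\star|\,X_u^{-3/2}$ near the origin (and like a polynomial in $Y_u$ away from it), so taking $L^q$-norms needs $\Ex\big[(\int_0^T X_u^{-3/2}du)^q\big]<\infty$, i.e.\ $\tfrac32 q<\tfrac{2a}{\sigma^2}$, i.e.\ $q<\tfrac{4a}{3\sigma^2}$ — precisely the stated range; the $\mathcal M$-term is integrable on the wider range $q<\tfrac{2a}{\sigma^2}$ and hence not binding.

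The hard part will be twofold. First, one must \emph{not} estimate the per-step consistency errors $R_{t_k,t}$ in absolute value: the naive bound $\sum_k|R_{t_k,t_{k+1}}|$ is only $O(\sqrt{T/n})$, so the stochastic parts have to be kept together as the global martingale $\mathcal M$ and the whole Gr\"onwall scheme organised around that subtraction. Second, the drift is singular at $0$, so every estimate involving $b,b',b''$ — in particular the finite-variation term $\asymp X^{-3/2}$ above, and the cross-terms $\mathcal M_t\big(b(\hY_t)-b(Y_t)\big)$, which carry the factor $(\hY_tY_t)^{-1}$ — must be weighed against the available negative moments; making this rigorous will also require a stopping-time localisation keeping $\hY$ (and $Y$) away from $0$, with the stopped estimates uniform in $n$, and this localisation step is the technically heaviest part of the argument.
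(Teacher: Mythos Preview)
Your proposal identifies exactly the same structural ingredients as the paper: the Lamperti transform $Y=\sqrt{X}$, the It\^o decomposition of the one-step consistency error into the global martingale $\mathcal{M}_t=\frac{\sigma}{2}\int_0^t(u-t_{\eta(u)})b'(Y_u)\,dW_u$ and the drift term with leading singularity $X^{-3/2}$, the H\"older splitting $|\hX-X|=|e|(\hY+Y)$, and the correct identification of the binding moment constraint $p<\tfrac{4a}{3\sigma^2}$ coming from $\Ex[(\int_0^T X_u^{-3/2}du)^p]$. Where you diverge is in the stability step: you propose an energy method (multiply the recursion by $e_{t_{k+1}}$, use the one-sided Lipschitz bound to kill the singular drift, then discrete Gr\"onwall on $e_{t_{k+1}}^2$), and you correctly flag the resulting difficulties --- the non-adapted cross-term $e_{t_{k+1}}\Delta\mathcal{M}_k$, the factor $(\hY_tY_t)^{-1}$, and the need for a localisation near $0$.

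The paper bypasses all of this with a \emph{linear} integrating-factor argument. Write $b(\hY_t)-b(Y_t)=\beta_t e_t$ with $\beta_t\le\kappa$ (mean-value form of the one-sided Lipschitz condition); then the recursion is $(1-\beta_{t_{k+1}}\tfrac{T}{n})e_{t_{k+1}}=e_{t_k}+A_k+\Delta\mathcal{M}_k$, a first-order linear difference equation in $e$. Multiplying by $\Pi_k:=\prod_{l=1}^k(1-\beta_{t_l}\tfrac{T}{n})$ telescopes it exactly, and an Abel summation turns the martingale sum $\sum_l\Pi_l\Delta\mathcal{M}_l$ into $\tilde\Pi_{k-1}\mathcal{M}_{t_k}+\sum_l(\tilde\Pi_{l-1}-\tilde\Pi_l)\mathcal{M}_{t_l}$, whence $|e_{t_k}|\le C\big(\tfrac{T}{n}\int_0^{t_k}|b'b+\tfrac{\sigma^2}{8}b''|(Y_u)\,du+2\max_l|\mathcal{M}_{t_l}|\big)$ directly, with no squaring, no cross-terms, and no localisation. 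Your approach would likely go through, but the linear recursion is shorter and removes precisely the two parts you describe as hardest.
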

\noindent Thus, we get a strong convergence of order~$1$ under more restrictive
conditions on $\sigma^2/a$. Both theorems are complementary and are compatible
with the numerical study of~\cite{A}, which indicates that the strong
convergence order downgrades as long as $\sigma^2/a$ increases.

The paper is structured as follows. We first prove that $\left(\Ex
  \left[\max_{t\in[0,T]} |\hY_t-Y_t|^p \right] \right)^{1/p}\le K_p
\frac{T}{n}$ under a general framework for $Y$ and $\hY$ that extends~\eqref{SDE_sqrtCIR}
and~\eqref{imp_sch_CIR}. Then, we
deduce Theorem~\ref{main_thm} from this result. Also, we construct an
analogous drift implicit scheme for general one-dimensional diffusion, and get a strong convergence
of order one under suitable assumptions on the coefficients. This scheme has
the advantage to be
naturally defined in the diffusion domain like $\R_+^*$ for the CIR case. 

\subsection*{A general framework for~$Y$ and~$\hY$}

Let $c\in [-\infty,+\infty)$, $I=(c,+\infty)$ and $d\in I$. We consider in
this section the following SDE defined on $I=(c,+\infty)$:  
\begin{equation} \label{def_Y}
  dY_t=f(Y_t) dt +\gamma dW_t,\ t\ge 0, Y_0=y\in I,
\end{equation}
with $\gamma>0$. We make the following monotonicity assumption on the drift coefficient~$f$:
\begin{equation}\label{hyp_f}
f:I \rightarrow \R \text{ is } \mathcal{C}^2, \text{ such that }
\exists \kp \in \R, \forall y,y' \in I,\ y \le y', \ f(y')-f(y)\le \kp (y'-y).
\end{equation}
Besides, we assume 
 \begin{equation} \label{cond_I} v(x)=\int_d^x \int_d^y \exp \left(-\frac{2}{\gamma^2} \int_z^y f(\xi)d\xi \right)
 dzdy \text{ satisfies } \lim_{x\rightarrow c+}v(x)=-\infty.
\end{equation}
The Feller's test (see e.g. Theorem~5.29 p.~348 in~\cite{KS}) ensures that $Y$ never
reaches~$c$ nor $+\infty$ by~\eqref{hyp_f}, and the SDE~\eqref{def_Y} admits a unique strong solution
 on~$I$.
 
Let us now define the drift implicit scheme. Let us first observe that for
$h>0$ such that $\kp h<1$,
$y\mapsto y-h f(y)$ is a bijection from $I$ to~$\R$. Indeed, it is continuous and we
have from~\eqref{def_Y}:
$$y\le y', \  y'-y-h(f(y')-f(y))\ge (1-\kp h)(y'-y).$$
This shows the claim for $c=-\infty$. For $c>-\infty$, we first remark that
$\lim_{c+}f $ exists from~\eqref{hyp_f}, and is necessarily equal to~$+\infty$
from~\eqref{cond_I}. Thus, for $n$ such that
$\kp T/n< 1$, the following drift implicit Euler scheme is well defined
\begin{equation}\label{gen_sch_Y}\hY_0=y, \ \hY_t=\hY_{t_k}+f(\hY_t)(t-t_k)+\gamma (W_t-W_{t_k}), \ t\in
(t_k,t_{k+1}], \ 0\le k\le n-1,
\end{equation}
and satisfies $\hY_t \in I$, for any $t\in [0,T]$. From a computational point
of view, let us remark here that in
cases where $\hY_{t_{k+1}}$ cannot be solved explicitly like in the CIR case,
 $\hY_{t_{k+1}}$ can still be quickly computed from~$\hY_{t_k}$ and $W_{t_{k+1}}-W_{t_k}$ thanks to the monotonicity of~$y\mapsto
y-(T/n) f(y)$ by using for example a dichotomic search.

The drift implicit Euler scheme (also known as backward Euler scheme) has been
studied by Higham, Mao and Stuart~\cite{HMS} for SDEs on~$\R^d$ with a
Lipschitz condition on the diffusion coefficient and a monotonicity condition
on the drift coefficient that extends~\eqref{hyp_f}. They show a strong
convergence of order~$1/2$ in this general setting.

\begin{proposition}\label{main_prop} Let $p\ge 1$ and $n> 2 \kp T$. Let us assume that
  \begin{equation}\label{Hyp_cv}\Ex \left[\left(\int_0^T |f'(Y_u)f(Y_u)+\frac{\gamma^2}{2}f''(Y_u)|du
    \right)^p \right]<\infty \text{ and } \Ex \left[\left(\int_0^T (f'(Y_u))^2du
    \right)^{p/2} \right]<\infty.
\end{equation}
Then, there is a constant $K_p>0$ such that:
  $$\left(\Ex \left[ \max_{t\in [0,T]} |\hY_t-Y_t|^p \right]\right)^{1/p} \le
  K_p \frac{T}{n}. $$
\end{proposition}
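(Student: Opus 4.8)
The plan is to compare $\hY$ and $Y$ directly, exploiting that the noise in \eqref{def_Y} and \eqref{gen_sch_Y} is additive --- so that the local error is purely a drift discrepancy --- and to gain the extra half-order over the order~$1/2$ of~\cite{HMS} by expanding the drift of $Y$ along each subinterval by It\^o's formula. \emph{Step 1 (error equation).} Fix $t\in(t_k,t_{k+1}]$. Subtracting \eqref{gen_sch_Y} from the integrated form of \eqref{def_Y}, adding and subtracting $(t-t_k)f(Y_t)$, and applying It\^o's formula to $u\mapsto f(Y_u)$ on $[t_k,t]$ together with the (stochastic) Fubini theorem, one obtains
\begin{equation*}
\bigl(\hY_t-Y_t\bigr)-(t-t_k)\bigl(f(\hY_t)-f(Y_t)\bigr)=\bigl(\hY_{t_k}-Y_{t_k}\bigr)+\int_{t_k}^{t}\Bigl(f'f+\tfrac{\gamma^2}{2}f''\Bigr)(Y_u)(u-t_k)\,du+\gamma\int_{t_k}^{t}f'(Y_u)(u-t_k)\,dW_u.
\end{equation*}
The left-hand side is $\psi_h(\hY_t)-\psi_h(Y_t)$ with $h=t-t_k$ and $\psi_h(y)=y-hf(y)$; by \eqref{hyp_f} the map $\psi_h$ is increasing with slope at least $1-\kp h\ge 1-\kp T/n\ge\half$ (the slope is $\ge 1$ if $\kp\le 0$), so in absolute value the left-hand side dominates $\half|\hY_t-Y_t|$.

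\emph{Step 2 (telescoping; the $Y$-only remainder is $O(T/n)$).} Telescoping the identity of Step~1 over the grid with $\hY_0=Y_0$, and writing $\underline u=t_j$ for $u\in(t_j,t_{j+1}]$, gives for $t\in(t_k,t_{k+1}]$
\begin{equation*}
\hY_{t}-Y_{t}=\frac{T}{n}\sum_{t_j\le t}\bigl(f(\hY_{t_j})-f(Y_{t_j})\bigr)+(t-t_k)\bigl(f(\hY_t)-f(Y_t)\bigr)+R^{\mathrm{bv}}_t+R^{\mathrm{mart}}_t,
\end{equation*}
where $R^{\mathrm{bv}}_t:=\int_0^t(f'f+\tfrac{\gamma^2}{2}f'')(Y_u)(u-\underline u)\,du$ and, crucially, the martingale contributions reassemble into the single stochastic integral $R^{\mathrm{mart}}_t:=\gamma\int_0^t f'(Y_u)(u-\underline u)\,dW_u$, with no loss of cancellation. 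Since $0\le u-\underline u\le T/n$, the first half of \eqref{Hyp_cv} gives $(\Ex[\max_{t\le T}|R^{\mathrm{bv}}_t|^p])^{1/p}\le \tfrac{T}{n}(\Ex[(\int_0^T|(f'f+\tfrac{\gamma^2}{2}f'')(Y_u)|\,du)^p])^{1/p}\le K_p\tfrac{T}{n}$, and the Burkholder--Davis--Gundy inequality together with the second half of \eqref{Hyp_cv} gives
\begin{equation*}
\Bigl(\Ex\bigl[\max_{t\le T}|R^{\mathrm{mart}}_t|^p\bigr]\Bigr)^{1/p}\le K_p\gamma\Bigl(\Ex\Bigl[\Bigl(\int_0^T(f'(Y_u))^2(u-\underline u)^2\,du\Bigr)^{p/2}\Bigr]\Bigr)^{1/p}\le K_p\gamma\,\frac{T}{n}.
\end{equation*}

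\emph{Step 3 (closing via a Gronwall argument).} Moving the $j=k$ term of the $j$-sum to the left-hand side and using the lower slope bound of Step~1, one gets $\half|\hY_t-Y_t|\le\frac{T}{n}\sum_{t_j<t}|f(\hY_{t_j})-f(Y_{t_j})|+|R^{\mathrm{bv}}_t|+|R^{\mathrm{mart}}_t|$ for $t\in(t_k,t_{k+1}]$ (the value of $\hY$ inside an interval being controlled by its left endpoint and the within-interval remainder, so the continuous maximum reduces to the grid maximum up to a negligible term). Since $f\in\mathcal{C}^2$, $|f(\hY_{t_j})-f(Y_{t_j})|\le\Lambda_j|\hY_{t_j}-Y_{t_j}|$ with $\Lambda_j:=\sup|f'|$ on the segment joining $\hY_{t_j}$ and $Y_{t_j}$, and the $j$-sum now runs over earlier times only, so this is a discrete Gronwall inequality for $\max_{s\le t}|\hY_s-Y_s|$. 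Iterating yields $\max_{t\le T}|\hY_t-Y_t|\le C(\max_{t\le T}|R^{\mathrm{bv}}_t|+\max_{t\le T}|R^{\mathrm{mart}}_t|)\exp(C\tfrac{T}{n}\sum_{j=1}^{n}\Lambda_j)$; taking $L^p$-norms, using H\"older's inequality together with moment estimates for $Y$ (from \eqref{Hyp_cv} and the Feller condition) and for $\hY$ (from~\cite{A}) to bound $\Ex[\exp(C'\tfrac{T}{n}\sum_j\Lambda_j)]$ uniformly in $n$, and invoking Step~2, gives $(\Ex[\max_{t\le T}|\hY_t-Y_t|^p])^{1/p}\le K_p\tfrac{T}{n}$.

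The main obstacle is Step~3: the drift $f$ need not be globally Lipschitz --- for the CIR drift in \eqref{SDE_sqrtCIR} it blows up at the boundary --- so the local Lipschitz constants $\Lambda_j$ are unbounded and no pathwise Gronwall estimate is available; one must pass to $L^p$-norms and control, uniformly over the grid, the exponential moments of $\tfrac{T}{n}\sum_j\Lambda_j$, and it is exactly this control that will translate, in the CIR case, into the restriction $p<\tfrac{4a}{3\sigma^2}$ (and underlies the assumption $a>\sigma^2$). A secondary, routine point is to justify the stochastic Fubini step of Step~1 and the interchange of maxima with the interpolation, both of which follow from \eqref{Hyp_cv}.
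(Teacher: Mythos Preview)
Steps~1 and~2 are essentially the paper's own argument: your error identity is the paper's intermediate equation, and your bounds on $R^{\mathrm{bv}}$ and $R^{\mathrm{mart}}$ via \eqref{Hyp_cv} and the Burkholder--Davis--Gundy inequality are exactly what the paper does.

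The gap is Step~3. By taking absolute values before summing, you discard the sign information in $f(\hY_{t_j})-f(Y_{t_j})$ and are forced into a Gronwall iteration with random coefficients~$\Lambda_j$. The resulting factor $\exp\bigl(C\tfrac{T}{n}\sum_j\Lambda_j\bigr)$ requires, after H\"older, a uniform-in-$n$ bound on $\Ex\bigl[\exp\bigl(C'\tfrac{T}{n}\sum_j\Lambda_j\bigr)\bigr]$. Nothing in the hypotheses delivers this: assumption~\eqref{Hyp_cv} gives only $L^p$-type integrability of $f'(Y_\cdot)$, not exponential moments of $\int_0^T|f'(Y_u)|\,du$, and still less of a supremum of $|f'|$ over a random segment touching~$\hY$. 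Polynomial moment bounds on $Y$ and $\hY$ do not produce exponential ones, so the appeal to ``moment estimates'' does not close the argument. Moreover, the H\"older step would force you to invoke \eqref{Hyp_cv} at some exponent $pq>p$, so even if the exponential moment were available you would prove the proposition only under a strictly stronger hypothesis. Your remark that this control ``will translate into the restriction $p<\tfrac{4a}{3\sigma^2}$'' is also off: in the paper that restriction arises solely from verifying \eqref{Hyp_cv} for the CIR drift, not from any exponential-moment estimate.

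The paper's route is to keep the signs. Writing $f(\hY_{t_l})-f(Y_{t_l})=\beta_{t_l}\,e_{t_l}$ with $\beta_{t_l}\le\kp$ (from~\eqref{hyp_f}) gives $\bigl(1-\beta_{t_{l+1}}\tfrac{T}{n}\bigr)e_{t_{l+1}}=e_{t_l}+(\text{remainder})_l$, hence $e_{t_k}=\Pi_k^{-1}\sum_{l<k}\Pi_l\,(\text{remainder})_l$ with $\Pi_k=\prod_{l=1}^k\bigl(1-\beta_{t_l}\tfrac{T}{n}\bigr)$. The point is that $\beta_{t_l}$ may be arbitrarily negative --- this is exactly your unbounded~$\Lambda_l$ --- but then $1-\beta_{t_l}\tfrac{T}{n}$ is large and positive, so the ratios obey the \emph{deterministic} bound $\Pi_l/\Pi_k\le(1-\kp T/n)^{-(k-l)}\le e^{2\kp T}$. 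This replaces your random Gronwall exponential by a fixed constant, and the proof closes using only \eqref{Hyp_cv} at exponent~$p$ (an Abel transformation handles the maximum over~$k$ in the martingale sum). In short: do not take absolute values before telescoping; keep the implicit factors on the left and divide through by their product.
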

Before proving this result, let us recall that the same result holds for the
usual (drift explicit) Euler-Maruyama scheme when $I=\R$ (i.e. $c=-\infty$), under some regularity
assumption on~$f$. Said differently, the Euler-Maruyama scheme
($\bar{Y}_{t_{k+1}}=\bar{Y}_{t_{k}}+f(\bar{Y}_{t_{k}})T/n+\gamma(W_{t_{k+1}}-W_{t_{k}})$)
coincides with
the Milstein scheme when the diffusion coefficient is constant, and its order
of strong convergence is thus equal to one. The main advantage of the
drift implicit scheme is that it is well defined when $c>-\infty$ while the
Euler-Maruyama is not, since the Brownian increment may lead outside~$I$.

\begin{proof}
We may assume without loss of generality that $\kp \ge 0$. For $t\in [0,T]$,
we set $e_t=\hY_t-Y_t$. From~\eqref{hyp_f}, there is $\beta_t \le \kp$, such
that $f(\hY_t)-f(Y_t)=\beta_t e_t$. For $0\le k\le n-1$, we have
$$e_{t_{k+1}}=e_{t_k}+[f(\hY_{t_{k+1}})-f(Y_{t_{k+1}})]\frac{T}{n}+\int_{t_k}^{t_{k+1}}f(Y_{t_{k+1}})-f(Y_s)ds, $$
and then, by using It\^o's formula:
\begin{equation}\label{eq_intermed} \left(1-\beta_{t_{k+1}}\frac{T}{n}\right)e_{t_{k+1}}=
e_{t_k}+\int_{t_k}^{t_{k+1}}(u-t_k)[f'(Y_u)f(Y_u)+\frac{\gamma^2}{2}f''(Y_u)]du
+\gamma \int_{t_k}^{t_{k+1}}(u-t_k)f'(Y_u)dW_u.
\end{equation}
For $u\in[0,T]$, we denote by $\eta(u)$ the integer such
  that $t_{\eta(u)}\le u <t_{\eta(u)+1}$. We set  
$\Pi_0=1$, $\Pi_k= \prod_{l=1}^k (1-\beta_{t_l} \frac{T}{n}
  )$, $\tilde{e}_k=\Pi_k e_{t_k}$, $\tilde{\Pi}_k=\Pi_k/(1-\kp T/n)^k$ and
  $$M_{t}=\int_{0}^{t}(1-\kp T/n)^{\eta(u)}(u-t_{\eta(u)})\gamma
  f'(Y_u)dW_u.$$
Let us remark that $\Pi_k>0$, $\tPi_k\ge 1$ and $\tPi_k$ is nondecreasing with
respect to~$k$.  By multiplying equation~\eqref{eq_intermed} by~$\Pi_k$, we get
$$\tilde{e}_{k+1}=\tilde{e}_{k}+\Pi_k\left(\int_{t_k}^{t_{k+1}}
(u-t_k)[f'(Y_u)f(Y_u)+\frac{\gamma^2}{2}f''(Y_u)]du + \int_{t_k}^{t_{k+1}}
(u-t_k)\gamma f'(Y_u)dW_u\right).$$
Then, we obtain $\tilde{e}_{k}= \int_0^{t_k} \Pi_{\eta(u)}
(u-t_{\eta(u)})[f'(Y_u)f(Y_u)+\frac{\gamma^2}{2}f''(Y_u)]du + \sum_{l=0}^{k-1} \tPi_l
(M_{t_{l+1}}-M_{t_l}) $ by summing over~$k$ and finally get
\begin{equation}\label{intermed_ek}
  e_{t_k}= \int_0^{t_k} \frac{\Pi_{\eta(u)}}{\Pi_k}
(u-t_{\eta(u)})[f'(Y_u)f(Y_u)+\frac{\gamma^2}{2}f''(Y_u)]du + \frac{1}{\Pi_k} \sum_{l=0}^{k-1} \tPi_l
(M_{t_{l+1}}-M_{t_l}).
\end{equation}
Since $\frac1{1-x}\le \exp(2x)$ for $x\in[0,1/2]$, we have
$$0\le l\le k\le n,\  0< \frac{\Pi_l}{\Pi_k}=\frac{1}{(1-\kp T/n)^{k-l}}
\frac{\tPi_l}{\tPi_k}\le \exp \left(2(k-l)\kp \frac{T}{n}\right) \le \exp(2\kp
T) .$$
On the other hand, an Abel transformation gives $\sum_{l=0}^{k-1} \tPi_l
(M_{t_{l+1}}-M_{t_l})=\tPi_{k-1} M_{t_k}
+\sum_{l=1}^{k-1}(\tPi_{l-1}-\tPi_l)M_{t_l}$ and thus 
$$\left|\sum_{l=0}^{k-1} \tPi_l
(M_{t_{l+1}}-M_{t_l})\right|\le \tPi_{k-1}|M_{t_k}|+\sum_{l=1}^{k-1}(\tPi_{l}-\tPi_{l-1})
|M_{t_l}|\le 2 \tPi_{k} \max_{1\le l \le k} |M_{t_k}|,$$
since $\tPi_k$ is nondecreasing. From~\eqref{intermed_ek} and $\frac{\tPi_k}{\Pi_k}=\frac{1}{(1-\kp T/n)^k}\le \exp(2\kp
T)$, we get
$$|e_{t_k}|\le \exp(2 \kp T) \left( \frac{T}{n} \int_0^{t_k}
  |f'(Y_u)f(Y_u)+\frac{\gamma^2}{2}f''(Y_u)|du+ 2 \max_{0\le l\le k} |M_{t_l}|\right). $$
Since the right hand side is nondecreasing with respect to~$k$, we can replace
the left hand side by~$\max_{0\le l\le k} |e_{t_l}|$. Burkholder-Davis-Gundy
inequality gives that $$\Ex \left[\max_{0\le l\le n} |M_{t_l}|^p\right]\le C_p \gamma^p (T/n)^p \Ex \left[\left(\int_0^T (f'(Y_u))^2du
    \right)^{p/2} \right],$$
  since $0\le (1-\kp T/n)^{\eta(u)} \le 1$.
Thus, there is a positive
constant~$K$ depending on~$\kp$, $T$ and $p$ such that:
\begin{align}\label{max_discret}
\Ex\left[ \max_{0\le l \le n} |e_{t_l}|^p \right]
\le K \left(\frac{T}{n}\right)^p  & \left(  \Ex \left[\left( \int_0^{T}
  |f'(Y_u)f(Y_u)+\frac{\gamma^2}{2}f''(Y_u)|du\right)^p \right] \right.  \\& \left.+ \gamma^p \Ex \left[\left(\int_0^T (f'(Y_u))^2du
    \right)^{p/2} \right] \right) \nonumber
\end{align}
It remains to show the analogous upper bound for $\Ex[\max_{t\in [0,T]}
|e_t|^p]$. Similarly to~\eqref{eq_intermed}, we have for $t\in[t_k,t_{k+1}]$:
$$\left(1-\beta_t (t-t_k) \right)e_t=
e_{t_k}+\int_{t_k}^{t}(u-t_k)[f'(Y_u)f(Y_u)+\frac{\gamma^2}{2}f''(Y_u)]du
+\gamma \int_{t_k}^{t}(u-t_k)f'(Y_u)dW_u.$$
Since $\left(1-\beta_t (t-t_k) \right) \ge 1/2$, we get:
\begin{align*}
  \max_{t\in [t_k,t_{k+1}]} |e_t| \le 2 & \left(
  |e_{t_k}|+\frac{T}{n}\int_{t_k}^{t_{k+1}}|f'(Y_u)f(Y_u)+\frac{\gamma^2}{2}f''(Y_u)|du
\right. \\
& \left.+ \gamma \max_{t\in [t_k,t_{k+1}]}  \left|\int_{t_k}^{t}(u-t_k)f'(Y_u)dW_u\right| \right),
\end{align*}
and thus
\begin{align*}\max_{t\in [0,T]} |e_t|^p \le 2^p3^{p-1} & \left( \max_{0\le k \le n} |e_{t_k}|^p
 + \left(\frac{T}{n}\right)^p
 \left(\int_{0}^{T}|f'(Y_u)f(Y_u)+\frac{\gamma^2}{2}f''(Y_u)|du \right)^p
\right.\\ &\left.
+ \gamma^p \max_{0\le s\le t \le T}  \left|\int_{s}^{t}(u-t_{\eta(u)})f'(Y_u)dW_u\right|^p \right).
\end{align*}
Since $\left|\int_{s}^{t}(u-t_{\eta(u)})f'(Y_u)dW_u\right|^p\le2^p\left(
\left|\int_{0}^{t}(u-t_{\eta(u)})f'(Y_u)dW_u\right|^p+
\left|\int_{0}^{s}(u-t_{\eta(u)})f'(Y_u)dW_u \right|^p\right)$,
we conclude by using once again Burkholder-Davis-Gundy
inequality,~\eqref{max_discret} and~\eqref{Hyp_cv}.
\end{proof}


\subsection*{Application to the CIR process}
For the CIR case, we have $c=0$ (i.e. $I=\R_+^*$), $f(y)=\frac{a-\sigma^2/4}{2y}-\frac{k}{2}y$ and
$\gamma=\sigma/2$. When $2a \ge
\sigma^2$, we can check that both~\eqref{hyp_f} and~\eqref{cond_I} are
satisfied. By Jensen inequality, \eqref{Hyp_cv} holds if we have
\begin{equation}\label{Hyp_cv2}
  \int_0^T \Ex[|f'(Y_u)f(Y_u)|^p+|f''(Y_u)|^p+|f'(Y_u)|^{2\vee p}]du<\infty.
\end{equation}
The moments of the CIR process can be uniformly bounded on~$[0,T]$ under the following condition (see~\cite{DNS} equation~(7)):
\begin{equation}\label{moments_cir}\sup_{t\in[0,T]} \Ex[X_t^q]< \infty \text{ for } q>-\frac{ 2a}{\sigma^2}.
\end{equation}
Condition~\eqref{Hyp_cv2} will hold as soon as $\sup_{t\in[0,T]}
\Ex[Y_t^{-(4\vee 3p)}]
=\sup_{t\in[0,T]} \Ex[X_t^{-(2\vee \frac{3}{2}p)}]<\infty$. This is satisfied when
$\sigma^2<a$ and $p<\frac{4}{3}\frac{a}{\sigma^2}$, and we have $\left(\Ex \left[ \max_{t\in [0,T]} |\hY_t-Y_t|^p \right]\right)^{1/p} \le
  K_p \frac{T}{n}. $

From now on, we
assume that $\sigma^2<a$ and consider $1\le p<\frac{4}{3}\frac{a}{\sigma^2}$. Let $\varepsilon>0$
such that $p(1+\varepsilon)<\frac{4}{3}\frac{a}{\sigma^2}$. Since $\hX_t-X_t=(\hY_t-Y_t)(\hY_t+Y_t)$,
we have by H\"older's inequality:
$$ \Ex \left[\max_{t\in [0,T]} |\hX_t-X_t|^p \right]^{\frac{1}{p}}\le \Ex \left[\max_{t\in [0,T]}
|\hY_t-Y_t|^{p(1+\varepsilon)}\right]^\frac{1}{p(1+\varepsilon)} \Ex\left[\max_{t\in [0,T]}
|\hY_t+Y_t|^{p\frac{1+\varepsilon}{\varepsilon}}\right]^{\frac{\varepsilon}{p(1+\varepsilon)}}.$$
The moment boundedness of $\hat{Y}$ is checked in~\cite{A} and~\cite{DNS}, and
the second expectation is thus finite. Proposition~\ref{main_prop} gives Theorem~\ref{main_thm}.

\subsection*{Application to $dX_t=(a-kX_t)dt+\sigma X_t^\alpha dW_t$, with
  $1/2<\alpha<1$}

We consider this SDE starting from $X_0=x>0$ 
with parameters $a>0$, $k\in \R$ and $\sigma >0$. This SDE is known to have
a unique strong positive solution~$X$, which can be checked easily by Feller's
test for explosions. We set $$Y_t=X_t^{1-\alpha}.$$ It is
defined on $I=\R_+^*$ and satisfies~\eqref{def_Y} with
$$f(y)=(1-\alpha)\left(a
  y^{-\frac{\alpha}{1-\alpha}}-ky-\alpha\frac{\sigma^2}{2}y^{-1} \right)\text{ with
} \gamma=\sigma (1-\alpha).$$
Since $a>0$ and $\frac{\alpha}{1-\alpha}>1$, $f$ is decreasing on
$(0,\varepsilon)$, for $\varepsilon>0$ small enough. It is also clearly
Lipschitz on $[\varepsilon,+\infty)$, and~\eqref{hyp_f} is thus
satisfied. Also, we check easily that~\eqref{cond_I} holds. The drift implicit
scheme $(\hat{Y}_t,t\in[0,T])$ given by~\eqref{gen_sch_Y} is thus well defined
for large~$n$ and we set:
$$\hat{X}_t=(\hat{Y}_t)^{\frac{1}{1-\alpha}}.$$ To apply
Proposition~\ref{main_prop}, it is enough to check that~\eqref{Hyp_cv2} holds. To do so, we
have the following lemma.
\begin{lemma}We have: $\forall q \in \R, \sup_{t\in[0,T]} \Ex[X_t^q]< \infty$.
\end{lemma}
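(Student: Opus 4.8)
The plan is to treat positive and negative moments separately and, in each case, to apply It\^o's formula to $x\mapsto x^{q}$ and then run a localization--Gronwall--Fatou argument. Since the SDE for $X$ has a unique strong $\R_+^*$-valued solution that does not explode (recalled above via Feller's test), the stopping times $\tau_N=\inf\{t\ge 0:\ X_t\notin(1/N,N)\}$ increase to $+\infty$ almost surely, and on $[0,\tau_N]$ the process $X$ remains in the compact interval $[1/N,N]$; consequently every stochastic integral below, once stopped at $\tau_N$, is a genuine martingale with zero expectation, and the It\^o identities hold in expectation. The case $q=0$ is trivial, and for $0<q<1$ nothing more than the case $q=1$ is needed, since $X_t^{q}\le 1+X_t$ gives $\Ex[X_t^{q}]\le 1+\Ex[X_t]$.

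For $q\ge 1$, It\^o's formula gives $\Ex[X_{t\wedge\tau_N}^{q}]=x^{q}+\Ex\big[\int_0^{t\wedge\tau_N} g_q(X_s)\,ds\big]$ with $g_q(x)=qa\,x^{q-1}-qk\,x^{q}+\tfrac{q(q-1)\sigma^2}{2}\,x^{\,q-2+2\alpha}$. Because $\tfrac12<\alpha<1$, the relevant exponents satisfy $0\le q-1<q$ and $0<2\alpha-1\le q-2+2\alpha<q$, so each monomial is bounded by $1+x^{q}$ up to a constant and hence $g_q(x)\le C(1+x^{q})$ on $\R_+^*$. Gronwall's lemma then gives $\Ex[X_{t\wedge\tau_N}^{q}]\le (x^{q}+CT)e^{CT}$ uniformly in $t\in[0,T]$ and in $N$, and Fatou's lemma yields $\sup_{t\in[0,T]}\Ex[X_t^{q}]<\infty$.

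For $q=-m$ with $m>0$, It\^o's formula applied to $x\mapsto x^{-m}$ produces the analogous identity with drift $g(x)=-ma\,x^{-m-1}+mk\,x^{-m}+\tfrac{m(m+1)\sigma^2}{2}\,x^{\,2\alpha-m-2}$, and the key step is to show $g(x)\le C(1+x^{-m})$ on $\R_+^*$. As $x\to 0+$ the three exponents obey $-m-1<2\alpha-m-2<-m$, the first inequality being \emph{exactly} $\alpha>\tfrac12$ and the second $\alpha<1$; hence the most singular term is $-ma\,x^{-m-1}$, which is \emph{negative}, so $g(x)\to-\infty$ as $x\to0+$ and $g$ is bounded above on some interval $(0,x_0]$. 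On $[x_0,1]$, $g$ is continuous, hence bounded above; and for $x\ge 1$ one has $x^{-m}\le 1$ and $x^{2\alpha-m-2}\le 1$, so $g(x)\le m\max(k,0)+\tfrac{m(m+1)\sigma^2}{2}$. Combining the three regimes gives $g(x)\le C(1+x^{-m})$, and the same localization--Gronwall--Fatou scheme gives $\sup_{t\in[0,T]}\Ex[X_t^{-m}]<\infty$.

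The only genuinely delicate point is the behaviour of $g$ near $x=0$ in the negative-moment case: one must verify that the downward push generated by the linear drift coefficient $a$ (the term $-ma\,x^{-m-1}$) dominates the upward contribution of the quadratic variation ($\propto x^{2\alpha-m-2}$), and this is precisely where the hypothesis $\alpha>\tfrac12$ enters --- it plays the same role as the condition $2a>\sigma^2$ that keeps the CIR process away from $0$. Everything else is routine; in particular the constant $C$ depends only on $m$ (or on $q$ in the positive case) and on $a,k,\sigma,\alpha$, not on $t\in[0,T]$ or $N$, which is all the statement requires. (Equivalently, since $Y=X^{1-\alpha}$ with $1-\alpha\in(0,1)$, this is the same as showing all positive and negative moments of $Y$ are bounded on $[0,T]$, which is exactly what \eqref{Hyp_cv2} needs.)
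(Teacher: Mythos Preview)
Your argument is correct. Both the positive--moment and the negative--moment parts go through: the crucial observation in the negative case, that the exponent $-m-1$ of the term $-ma\,x^{-m-1}$ is strictly smaller than the exponent $2\alpha-m-2$ of the quadratic--variation contribution precisely because $\alpha>\tfrac12$, is exactly what forces $g(x)\to-\infty$ as $x\to0+$ and hence $g\le C$ on $\R_+^*$; the localization--Gronwall--Fatou machinery then applies without difficulty.

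Your route is, however, genuinely different from the paper's. For $q\ge 0$ the paper simply invokes the standard moment bound for SDEs with coefficients of sublinear growth (Karatzas--Shreve, p.~306). For $q<0$ the paper does \emph{not} run an It\^o--Gronwall argument on $X^{q}$: instead it passes to $Z_t=X_t^{2(1-\alpha)}$, so that the diffusion coefficient becomes $2(1-\alpha)\sigma\sqrt{Z_t}$, observes that the new drift $b(z)$ blows up to $+\infty$ as $z\to 0+$ and is Lipschitz away from~$0$, and therefore dominates any affine function $M-k_M z$. A pathwise comparison with the CIR process $d\xi^M_t=(M-k_M\xi^M_t)\,dt+2(1-\alpha)\sigma\sqrt{\xi^M_t}\,dW_t$ then gives $Z_t\ge\xi^M_t$, and the known CIR inverse--moment bound~\eqref{moments_cir} (valid for $q>-2M/(2(1-\alpha)\sigma)^2$) yields the claim by taking $M$ arbitrarily large. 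The paper's approach thus reduces the problem to a known CIR estimate via a comparison theorem; yours is more self--contained, needing neither the comparison principle nor the CIR moment formula, at the price of carrying out the It\^o computation and the three--region upper bound on $g$ explicitly.
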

\begin{proof}
For $q\ge 0$, it is well known that we even have $\Ex[ \max_{t\in[0,T]} X_t^q]<\infty$
from the sublinear growth of the SDE coefficients (see e.g. Karatzas and Shreve~\cite{KS}, p 306). Let
$q<0$. We set $Z_t=X_t^{2(1-\alpha)}$ and have:
$$ dZ_t=b(Z_t)dt + 2(1-\alpha)\sigma \sqrt{Z_t}dW_t, \text{ with } b(z)=2(1-\alpha)\left[a z^{\frac{1-2\alpha}{2(1-\alpha)}}-kz+\sigma^2\left(\frac{1}{2}-\alpha\right)
\right].$$
Since $\lim_{z\rightarrow 0^+}b(z)=+\infty$ and $b$ is Lipschitz on
$[\varepsilon,+\infty)$ for any $\varepsilon>0$, we can find for any~$M>0$ a
constant $k_M \in \R$ such that $b(z)\ge M -k_Mz$ for all $z>0$. We consider
then the following CIR process:
$$d\xi^M_t=(M-k_M \xi^M_t)dt+2(1-\alpha)\sigma \sqrt{\xi^M_t}dW_t, \ \xi^M_0=x^{2(1-\alpha)}.$$
From a comparison theorem (Proposition 2.18, p 293 in~\cite{KS}) we get
that $\forall t\ge 0, Z_t\ge \xi^M_t$ and thus $\sup_{t\in[0,T]} \Ex[Z_t^q]
\le \sup_{t\in[0,T]} \Ex[(\xi^M_t)^q]$. We conclude by
using~\eqref{moments_cir} and taking $M$ is arbitrary large.
\end{proof}
We can then apply Proposition~\ref{main_prop} and get, for any $p\ge 1$ and $n$
large enough, the existence of a constant
$K_p>0$ such that $\left(\Ex \left[ \max_{t\in [0,T]} |\hY_t-Y_t|^p \right]\right)^{1/p} \le
  K_p \frac{T}{n}. $ In particular, we get $\Ex[\max_{t\in[0,T]}
  \hY_t^p]<\infty$. We have $\hX_t=(\hY_t)^{\frac{1}{1-\alpha}}$ and  
  $$ \hat{y},y>0, \
  |\hat{y}^{\frac{1}{1-\alpha}}-y^{\frac{1}{1-\alpha}}|=\frac{1}{1-\alpha} \left|\int_y^{\hat{y}}
  z^{\frac{\alpha}{1-\alpha}}dz \right| \le\frac{1}{1-\alpha}|\hat{y}-y|
(\hat{y} \vee y)^{\frac{\alpha}{1-\alpha}}. $$
The Cauchy-Schwarz inequality leads then to
 $$\Ex \left[\max_{t\in [0,T]} |\hX_t-X_t|^p \right]^{\frac{1}{p}} \le
 \frac{1}{1-\alpha} \Ex \left[\max_{t\in [0,T]} |\hY_t-Y_t|^{2p}
 \right]^{\frac{1}{2p}}\Ex \left[\max_{t\in [0,T]} (\hY_t\vee Y_t)^{\frac{2p\alpha}{1-\alpha}}
 \right]^{\frac{1}{2p}} \le \tilde{K}_p\frac{T}{n}. $$

\subsection*{Strong convergence towards~$X$ in a general framework}
Let us now consider a one-dimensional SDE with Lipschitz
coefficients~$b,\sigma:\R\rightarrow \R$:
$$ dX_t=b(X_t)dt+\sigma(X_t)dW_t, \ X_0=x.$$
We will consider the Lamperti transformation of this SDE. 
 We assume that there exist $0<\underline{\sigma}<\overline{\sigma}$ such that
$\underline{\sigma} \le \sigma(x)\le\overline{\sigma}$, so that
$$\varphi(x)=\int_0^x \frac{1}{\sigma(z)}dz \text{ is bijective on }\R,$$
Lipschitz and such that $\varphi^{-1}$ is Lipschitz. Besides, we assume that
$\sigma \in
\mathcal{C}^1$ and that $f= \left(\frac{b}{\sigma}-\frac{\sigma'}{2} \right)
\circ \varphi^{-1}$ satisfies~\eqref{hyp_f},~\eqref{cond_I} and:
$$\exists K>0, q>0, \forall y \in \R, \ |f'(y)|+|f''(y)|\le C(1+|y|^q).$$
Then $Y_t=\varphi(X_t)$ satisfies $dY_t=f(Y_t)dt+  dW_t$.
The Lipschitz assumption on the coefficients~$b$ and $\sigma$ ensures the
boundedness of moments of~$X$ and thus of~$Y$. The condition~\eqref{Hyp_cv} is thus satisfied and the conclusion of Proposition~\ref{main_prop}
holds. Then, defining $\hY$ by~\eqref{gen_sch_Y} and setting
$\hX_t=\varphi^{-1}(\hY_t)$ for $t\in [0,T]$, we get that:
$$\exists K_p>0,\ \left(\Ex \left[ \max_{t\in [0,T]} |\hX_t-X_t|^p \right]\right)^{1/p} \le
  K_p \frac{T}{n}. $$
Let us mention that the same result holds under suitable conditions on~$f$ for the scheme
$\bar{X}_t=\varphi^{-1}(\bar{Y}_t)$, where $\bar{Y}$ denotes the Euler-Maruyama scheme
$d\bar{Y}_t=f(\bar{Y}_{t_{\eta(t)}})dt+dW_t$. The weak convergence of this
scheme has been studied by Detemple, Garcia and Rindisbacher~\cite{DGR}. 

\begin{remark}Let $\gamma>0$, $\varphi_\gamma(x)=\gamma \varphi(x)$ and
$f_\gamma(y)=\gamma f(y/\gamma)$. Then, $Y'_t=\varphi_\gamma(X_t)$ solves
$dY'_t= f_\gamma(Y'_t)dt + \gamma dW_t$. The associated drift implicit scheme
$$\hY'_0=\varphi_\gamma(X_0), \ \hY'_t=\hY'_{t_k}+f_\gamma(\hY'_t)(t-t_k)+\gamma (W_t-W_{t_k}), \ t\in
(t_k,t_{k+1}], \ 0\le k\le n-1,$$
clearly satisfies $ \hY'_t=\gamma \hY_t$. Thus,
$\hX_t=\varphi_\gamma^{-1}(\hY'_t)$: the scheme $\hX$ is unchanged when the
transformation
between~$X$ and~$Y$  is multiplied by a positive constant. 
\end{remark}
\noindent {{\bf Acknowledgments.} The author acknowledges the support of the Eurostars E!5144-TFM
project and of the ``Chaire Risques Financiers'' of Fondation du Risque.\\}

\end{document}